\documentclass[10pt]{article}

\usepackage[a4paper,margin=1in]{geometry}
\usepackage[T1]{fontenc}
\usepackage{lmodern}
\usepackage{amsmath,amssymb,amsthm,mathtools}
\usepackage{microtype}
\usepackage[skip=0.45\baselineskip plus 2pt,indent=0pt]{parskip}
\usepackage{aliascnt}
\usepackage{needspace}
\usepackage[colorlinks=true,linkcolor=blue,citecolor=blue,urlcolor=blue]{hyperref}
\usepackage[capitalize,nameinlink,noabbrev]{cleveref}

\numberwithin{equation}{section}
\newtheorem{theorem}{Theorem}[section]
\newaliascnt{claim}{theorem}
\newtheorem{claim}[claim]{Claim}
\aliascntresetthe{claim}
\crefname{claim}{Claim}{Claims}
\Crefname{claim}{Claim}{Claims}
\crefname{theorem}{Theorem}{Theorems}
\Crefname{theorem}{Theorem}{Theorems}
\crefname{section}{Section}{Sections}
\Crefname{section}{Section}{Sections}

\DeclareMathOperator{\ex}{ex}
\newcommand{\E}{\mathbb E}
\newcommand{\Prb}{\mathbb P}

\hypersetup{
  pdftitle={Improved Sidorenko bound for hypergraph Turán densities},
  pdfauthor={Shuyan Chen, Xinmin Hou, Xizhi Liu},
  pdfsubject={Hypergraph Turán densities and the fixed-size Turán problem}
}

\title{\Large\bfseries Improved Sidorenko bound for hypergraph Tur\'an densities}
\author{%
Shuyan~Chen\thanks{\scriptsize Department of Mathematics, University of Manchester, Manchester M13 9PL, United Kingdom. Email:~\texttt{shuyan.chen-2@student.manchester.ac.uk}.}
\and
Xinmin~Hou\thanks{\scriptsize School of Mathematical Sciences, University of Science and Technology of China, Hefei 230026, China. \\
Hefei National Laboratory,
University of Science and Technology of China, Hefei 230088, Anhui, China. Email:~\texttt{xmhou@ustc.edu.cn}.}
\and
Xizhi~Liu\thanks{\scriptsize School of Mathematical Sciences, University of Science and Technology of China, Hefei 230026, China. Email:~\texttt{liuxizhi@ustc.edu.cn}.}}
\date{\today}

\begin{document}
\maketitle

\begin{abstract}
For every fixed integer $r\ge3$, we prove that the Tur\'an density of every $r$-uniform hypergraph $F$ with $m\ge1$ edges satisfies $\pi(F)\le1-c_rm^{-(r-1)/r}$, where $c_r>0$ depends only on $r$. This improves Sidorenko's general bound, whose gap from $1$ is of order $m^{-1}$. Complete $r$-uniform hypergraphs show that the exponent $(r-1)/r$ is optimal. The proof combines a potential argument for partial embeddings with the Lov\'asz Local Lemma.
\end{abstract}

\section{Introduction}

An \emph{$r$-uniform hypergraph} $F$, or \emph{$r$-graph}, consists of a finite vertex set $V(F)$ and a set of $r$-element subsets of $V(F)$ called edges. All hypergraphs considered here are simple. We identify a hypergraph with its edge set, and hence write $|H|$ for the number of edges of $H$. Write $v(F)\coloneqq|V(F)|$.

The \emph{Tur\'an number} $\ex(n,F)$ is the maximum number of edges in an $n$-vertex $r$-graph containing no copy of $F$. Katona, Nemetz, and Simonovits~\cite{KNS64} showed that the normalized Tur\'an numbers have a limit, called the \emph{Tur\'an density}, $\pi(F)\coloneqq\lim_{n\to\infty}{\ex(n,F)}/{\binom nr}$.

We study the largest possible Tur\'an density when the number of edges of the forbidden hypergraph is fixed. For an ordinary graph $F$ with $m\ge1$ edges, the theorem of Erd\H{o}s and Stone~\cite{ErdosStone}, in the form given by Erd\H{o}s and Simonovits~\cite{ErdosSimonovits}, yields $\pi(F)=1-1/(\chi(F)-1)$, where $\chi(F)$ is the chromatic number of $F$. Since $m\ge\binom{\chi(F)}2$, we have $1-\pi(F)\ge(2m)^{-1/2}$. Complete graphs show that the exponent $1/2$ is optimal.

For $r\ge3$, a theorem of Erd\H{o}s~\cite{Erdos64} shows that an $r$-graph $F$ with at least one edge has $\pi(F)=0$ if and only if it is $r$-partite, meaning that $V(F)$ can be partitioned into $r$ parts so that every edge meets each part once. Positive Tur\'an densities are much harder to determine. Even for the complete $3$-graph $K_4^{(3)}$ on four vertices, the conjecture of Tur\'an~\cite{Turan41} that $\pi(K_4^{(3)})=5/9$ remains open. For further background on hypergraph Tur\'an problems, we refer the reader to the survey of Keevash~\cite{Keevash11}.

For upper bounds in terms of the number of forbidden edges, Sidorenko~\cite{Sidorenko89} proved that every $r$-graph $F$ with $m\ge2$ edges satisfies
\[
\pi(F)\le1-\frac1{m-1}.
\]
Sidorenko's later account~\cite{Sidorenko94} discusses this bound and the analytic methods behind it. Keevash~\cite[Theorem~1(ii)]{Keevash05} improved the bound by a lower-order term. For fixed $r\ge3$, his result gives, uniformly over $r$-graphs $F$ with $m$ edges as $m\to\infty$,
\[
\pi(F)<1-\frac1{m-1}
-\frac{1+o(1)}{2(r!)^{2/r}m^{3-2/r}}.
\]
Both bounds give a gap from $1$ of order $m^{-1}$. We improve this order to $m^{-(r-1)/r}$.

\begin{theorem}\label{thm:main}
For every integer $r\ge3$ and every $r$-graph $F$ with $m\ge1$ edges,
\[
\pi(F)\le1-
\frac1{4r^2}
\left(\frac4r\right)^{1/r}
\left(1-\frac1r\right)^{2-2/r}\frac{1}{m^{(r-1)/r}}.
\]
\end{theorem}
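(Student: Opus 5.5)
We will use the standard supersaturation/blow-up principle: it suffices to show that every $r$-graph $H$ on $n$ vertices with edge density at least $1-\delta$, where $\delta\coloneqq c_r m^{-(r-1)/r}$ with the constant $c_r$ from \cref{thm:main}, contains a copy of $F$ once $n$ is large. Since blow-ups of $F$ have the same Tur\'an density as $F$, we may replace $F$ by a blow-up. The idea is to embed $F$ vertex by vertex into $H$ using a random injection, but to bias the image of each vertex away from ``bad'' vertices of $H$. We also treat $F$ as having $m$ edges and at most $rm$ non-isolated vertices.

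\textbf{Step 1: potential function and partial embeddings.} For a partial map $\phi$ defined on a set $S\subseteq V(F)$, define its potential as the expected number of edges of $F$ that would be mapped onto non-edges of $H$ if the remaining vertices were mapped uniformly at random. Initially this potential equals $m\delta'$, where $\delta'\le\delta$ is the density of non-edges. We extend $\phi$ one vertex at a time, choosing the image that does not increase the potential; this is the method of conditional expectations. The potential alone only reaches $m\delta<1$ when $\delta<1/m$, which recovers Sidorenko's bound, so we need a better mechanism.

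\textbf{Step 2: local lemma for the dense regime.} To improve this to $\delta\sim m^{-(r-1)/r}$, note that $F$ has at most $rm$ vertices, but we should think of a typical vertex degree $d$. Each edge of $F$ meets at most $r(\Delta)$ other edges, where $\Delta$ is the maximum degree. We proceed as follows. Map $V(F)$ into $V(H)$ by choosing, for each vertex $x$ of $F$, an independent uniform vertex from a part $V_x$ of a random equipartition. This is legitimate because $H$ contains, by a standard averaging argument, an $v(F)$-partite subgraph of density about $1-\delta$. Let $A_e$ be the event that edge $e$ lands on a non-edge. Then $\Prb(A_e)\approx\delta$, and $A_e$ depends only on the events $A_f$ with $f\cap e\neq\emptyset$. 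The symmetric Lov\'asz Local Lemma then succeeds if $e\,\delta\,(r\Delta+1)\le 1$. This handles the case where $\Delta\lesssim m^{(r-1)/r}$.

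\textbf{Step 3: combining, and the main obstacle.} High-degree vertices are few: since $\sum_x\deg(x)=rm$, there are at most $rm/D$ vertices of degree more than $D$. Choose $D=m^{(r-1)/r}$, so the set $B$ of high-degree vertices has size at most $r m^{1/r}$. First embed $B$ using the potential argument of Step 1, applied to the edges inside $B$ and, via the conditional densities, to the links of $B$. Edges inside $B$ number at most $\binom{|B|}{r}\lesssim m$, but each such edge only needs to be weighted by $\delta$. More precisely, the potential argument embeds $B$ into a copy of a clique of density $1-\delta$ whenever $\binom{|B|}{r}\delta<1$, which holds since $|B|^r\delta\approx r^r m\cdot m^{-(r-1)/r}\cdot c_r$; this can be made small only after choosing the balance point $D$ correctly. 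We then choose the images of $B$ so that the link densities seen by the remaining vertices are still at least $1-O(\delta)$, and apply Step 2 to the low-degree part.

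The delicate step, which I expect to be the main obstacle, is this interface. We must keep the conditional non-edge probabilities of edges meeting $B$ controlled by $O(\delta)$ simultaneously for all such edges, while the local lemma is applied with dependency degree at most $rD$. The balance $|B|^r\delta\approx 1$ against $rD\delta\approx1$ with $|B|\approx rm/D$ gives exactly $D^{r}\cdot\delta\cdot\ldots$, namely $\delta\approx m^{-(r-1)/r}$. Optimizing the constants in this balance is what produces the factor $\frac{1}{4r^2}(4/r)^{1/r}(1-1/r)^{2-2/r}$.
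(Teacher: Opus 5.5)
Your two-stage architecture (embed the heavy vertices into a clique of $H$, then finish with the Local Lemma) matches the paper's, but both load-bearing steps fail as written.

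\textbf{The balance in Step~3.} It does not give the exponent $(r-1)/r$. Your clique condition is $\binom{|B|}{r}\delta<1$, which is what the Step~1 potential gives for $K^{(r)}_{|B|}$. With $D=m^{(r-1)/r}$ you get $|B|\le rm^{1/r}$ and $\binom{|B|}{r}\delta\approx(r^r/r!)\,c_rm^{1/r}\to\infty$, so the condition fails for large $m$. No choice of $D$ rescues it. The Local Lemma forces $D\lesssim1/(r\delta)$, so $|B|$ can be as large as about $r^2m\delta$, and then $|B|^r\delta\lesssim1$ yields only $\delta\lesssim m^{-r/(r+1)}$.

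To reach $m^{-(r-1)/r}$ you need cliques of size about $\delta^{-1/(r-1)}$, i.e.\ the condition $|B|^{r-1}\delta\lesssim1$. Such cliques come from building the clique greedily while controlling, by a union bound, the non-link densities of the $\binom{k}{r-1}$ subsets already chosen. The paper does this with the tower potential $\Xi=\sum_jT^{r-1-j}S_j$. It dominates $S_{r-1}$, grows in expectation by at most $\Xi/T$ per step, and starts at $q_HT^{r-1}$, which is small exactly when $T\lesssim q^{-1/(r-1)}$.

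\textbf{The interface.} The step you flag as the main obstacle is the heart of the proof, and the proposal gives no mechanism for it. With $B$ fixed in advance by degree, nothing controls the conditional failure probabilities $p_e$ of edges meeting $B$ once $B$ is embedded. Averaging bounds only their sum, so a low-degree vertex may end up with large load $\ell_v=\sum_{e\ni v}p_e$. The symmetric Local Lemma needs a uniform bound $\max_ep_e=O(\delta)$, which you have not established.

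The paper resolves this in three ways.
\begin{enumerate}
\item It selects heavy vertices adaptively, by current load $\ell_v\ge1/(4r)$ rather than by degree.
\item It uses the weighted potential $\Phi=\sum_es_ep_e=\sum_{v\in U}\ell_v$. Assigning $v$ to a uniform vertex lowers its expectation by exactly $\ell_v$; your unweighted Step~1 potential is a martingale and has no such drift. This drift pays both for the factor $1/(1-\beta)$ from restricting to legal choices and for the increase of $r^2M\Xi$ in $\Psi=\Phi+r^2M\Xi$.
\item It finishes with the asymmetric Local Lemma with $x_e=2p_e$, which needs only $\sum_{f\sim e}p_f\le\sum_{v\in e\cap U}\ell_v<1/4$.
\end{enumerate}
No a priori bound on the number of heavy vertices is needed. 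The condition $\Psi\le M$ forces $\Xi\le1/r^2$, which keeps a $(1-\beta)$-fraction of legal choices at every step. The constant $c_r$ comes from the window $\frac{4r^4}{(r-1)^2}mq<T<(r^3q)^{-1/(r-1)}$; your claim that optimizing your balance produces it is unsupported.

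\textbf{Minor points.} The random equipartition in Step~2 makes $\Prb(A_e)$ depend on which parts are used; use independent uniform images and a union bound over collisions instead. Replacing $F$ by a blow-up changes $m$ and plays no role in the argument.
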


\emph{Remark.} A Lean~4 formalization establishes \Cref{thm:main}, together with the finite embedding assertion used in its proof and the existence of the Tur\'an density. The verified formalization is registered as \href{https://palomar-registry.org/entry?id=PALOMAR-2026-10-13-000012&version=1}{Palomar entry \texttt{PALOMAR-2026-10-13-000012}, version~1}, with source code at \url{https://github.com/xliu2022/improved-sidorenko-lean/tree/ba6f109bf01436ca15e55fdf5d35dbea85ffd9c9}.

For integers $s\ge r$, let $K_s^{(r)}$ be the complete $r$-graph on $s$ vertices. De Caen~\cite{deCaen83} proved that $\pi(K_s^{(r)})\le1-\binom{s-1}{r-1}^{-1}$. Sidorenko~\cite{Sidorenko81} proved the lower bound $\pi(K_s^{(r)})\ge1-((r-1)/(s-1))^{r-1}$ for $s\ge r$; see also his survey~\cite[Section~4.2]{SidorenkoSurvey}. For fixed $r$, $K_s^{(r)}$ has $m=\binom sr=\Theta_r(s^r)$ edges. Since $\binom{s-1}{r-1}^{-1}=s/(rm)$, the two bounds give $1-\pi(K_s^{(r)})=\Theta_r(m^{-(r-1)/r})$ as $s\to\infty$.

For integers $r\ge3$ and $m\ge1$, define
\[
\Pi_r(m)\coloneqq
\max\{\pi(F)\colon F\text{ is an $r$-graph with }|F|=m\}.
\]
It is clear that  deleting isolated vertices does not change the Tur\'an density. After their deletion, an $m$-edge $r$-graph has at most $rm$ vertices, so only finitely many isomorphism types occur and the maximum exists.

For fixed $r\ge3$ and any sufficiently large integer $m$, choose the largest integer $s\ge r$ with $\binom sr\le m$. Since $\binom sr\le m<\binom{s+1}r$, we have $m=\Theta_r(s^r)$ as $m\to\infty$. Add $m-\binom sr$ pairwise disjoint edges on new vertices to $K_s^{(r)}$, obtaining an $m$-edge $r$-graph $G$. Every $K_s^{(r)}$-free host is $G$-free, so Sidorenko's bound gives
\[
1-\pi(G)\le1-\pi(K_s^{(r)})
\le\left(\frac{r-1}{s-1}\right)^{r-1}
=O_r\bigl(m^{-(r-1)/r}\bigr).
\]

Combining \Cref{thm:main} with the bound on $1-\pi(G)$ above gives, for each fixed $r\ge3$ as $m\to\infty$,
\begin{equation}\label{eq:worst-case}
1-\Pi_r(m)=\Theta_r\bigl(m^{-(r-1)/r}\bigr).
\end{equation}

Keevash~\cite[Remark~(1)]{Keevash05} suggested that complete $r$-graphs should have the largest Tur\'an density among $r$-graphs with the same number of edges and observed that this assertion, together with de Caen's bound, would give a gap of order $m^{-(r-1)/r}$. \Cref{thm:main} establishes a lower bound of this order on $1-\pi(F)$ for every $r$-graph $F$.

For $m=3$, Frankl~\cite{Frankl90} proved that Sidorenko's bound $1/2$ is attained by the \emph{expanded triangle} of every even uniformity $r=2k\ge4$. An expanded triangle consists of the three pairwise unions of three disjoint $k$-sets. Keevash and Sudakov~\cite[Theorem~1.1]{KeevashSudakov05} showed that, for sufficiently large $n$, the extremal expanded-triangle-free $r$-graph consists of all $r$-sets meeting one part of a bipartition in an odd number of vertices, with the part sizes chosen to maximize the number of edges.
Hou, Liu, Zhang, Zhao, and Zhu~\cite[Theorem~1.1 and Corollary~1.2]{HouLiuZhangZhaoZhu25} proved that every three-edge $r$-graph $F$ with $r\ge3$ has $\pi(F)<1/2$ unless deleting its isolated vertices gives an expanded triangle,  answering negatively a question of Keevash~\cite[Remark~(2)]{Keevash05} which asks whether Sidorenko's bound $1/2$ can be attained for odd $r$. Other hypergraph Tur\'an problems with three-edge forbidden configurations include those for cancellative hypergraphs~\cite{Bollobas74,KeevashMubayi04}, generalized triangles~\cite{FranklFuredi89,Pikhurko08}, and weakly triangle-free hypergraphs~\cite{MubayiPikhurkoSudakov11,LiuMantel25}.

Other upper bounds depend on the intersections among the forbidden edges. Chen, Liu, Yang, and Zhu~\cite[Theorems~1.1 and~1.3]{ChenLiuYangZhu26} obtained upper bounds for the Tur\'an densities of certain hypergraphs with prescribed edge intersections in terms of the maximum size of set families with restricted intersection sizes or a bounded matching number. Using the Lov\'asz Local Lemma, Lu and Sz\'ekely~\cite[Theorem~4]{LuSzekely07} proved that $\pi(F)\le1-1/(\mathrm e(d+1))$ if every edge of $F$ meets at most $d$ other edges.

\section{Proof of the main theorem}\label{sec:proof}

We prove \Cref{thm:main} by combining a potential argument with the Local Lemma. We construct the embedding in two stages. In the first stage, we assign vertices whose incident unfinished edges have large total conditional failure probability, keeping all chosen vertices of the host hypergraph distinct and in a clique. One potential records the conditional failure probabilities of unfinished edges of $F$, and another bounds the proportion of host vertices excluded by the clique requirement. \Cref{clm:potential-updates} gives the estimates used in \Cref{lem:one-step} to assign one vertex. Iteration yields the partial map in \Cref{clm:small-load-map}, with small total conditional failure probability at every remaining vertex. In the second stage, we map the remaining vertices independently and apply the Local Lemma in \Cref{lem:local-completion} to ensure that every edge of $F$ is preserved.

\begin{proof}[Proof of \Cref{thm:main}]
Fix $r\ge3$ and set
\[
c_r\coloneqq
\frac1{4r^2}
\left(\frac4r\right)^{1/r}
\left(1-\frac1r\right)^{2-2/r}
=
\left(\frac{(r-1)^{2r-2}}{4^{r-1}r^{4r-1}}\right)^{1/r}.
\]
Let $F$ be an $r$-graph with $m\ge1$ edges, and fix $0<q<c_rm^{-(r-1)/r}$. We will show that, for all sufficiently large $n$, every $n$-vertex $r$-graph $H$ with $1-r!|H|/n^r\le q$ contains a copy of $F$.

The choice of $q$ implies $m^{r-1}q^r<c_r^r$. Substituting the value of $c_r$ and rearranging gives
\[
\left(\frac{4r^4}{(r-1)^2}mq\right)^{r-1}
<
\left(\frac{4r^4}{(r-1)^2}\right)^{r-1}\frac{c_r^r}{q}
=\frac1{r^3q}.
\]
We can therefore choose $T>0$ satisfying
\begin{equation}\label{eq:T-window}
\frac{4r^4}{(r-1)^2}mq
<T<
\left(\frac1{r^3q}\right)^{1/(r-1)}.
\end{equation}
Put $M\coloneqq r^2mq/(r-1)$. By the lower bound on $T$, we have $1/r-4M/T>1/r-(r-1)/r^2=1/r^2$. We can therefore fix $\beta$ satisfying
\begin{equation}\label{eq:beta-choice}
\frac1{r^2}<\beta<\frac1r-\frac{4M}{T}.
\end{equation}

Let $H$ be an $n$-vertex $r$-graph with $q_H\coloneqq1-r!|H|/n^r\le q$. Since $\beta>1/r^2$, we can take $n$ sufficiently large that
\begin{equation}\label{eq:n-choice}
\frac{v(F)}n\le\beta-\frac1{r^2}
\quad\text{and}\quad
\frac{\binom{v(F)}2}{n}
<
\left(1-\frac1{2r}\right)^m.
\end{equation}
We construct the embedding by mapping the vertices of $F$ to vertices of $H$. Once all vertices of an edge of $F$ have been mapped, we call the edge \emph{successful} if they are mapped to distinct vertices forming an edge of $H$, and say that it \emph{fails} otherwise. To record failure for an ordered $r$-tuple of vertices of $H$, define $\gamma_r\colon V(H)^r\to\{0,1\}$ by
\[
\gamma_r(x_1,\ldots,x_r)\coloneqq
\begin{cases}
0,&\text{if the $x_i$ are distinct and }\{x_1,\ldots,x_r\}\in H,\\
1,&\text{otherwise}.
\end{cases}
\]
Let $X_1,\ldots,X_r$ be independent uniform vertices of $H$. Of the $n^r$ equally likely ordered tuples, exactly $r!|H|$ are successful, so $\E[\gamma_r(X_1,\ldots,X_r)]=1-r!|H|/n^r=q_H$. For $0\le j<r$ and $a_1,\ldots,a_j\in V(H)$, define the conditional failure probability
\[
\gamma_j(a_1,\ldots,a_j)\coloneqq
\E[\gamma_r(a_1,\ldots,a_j,X_{j+1},\ldots,X_r)].
\]
Thus $\gamma_0=q_H$. Each $\gamma_j$ is symmetric in its arguments. We write $\E_x$ for expectation over a uniform vertex $x$ of $H$. Averaging over the next coordinate gives, for $0\le j<r$,
\begin{equation}\label{eq:tower}
\E_x[\gamma_{j+1}(a_1,\ldots,a_j,x)]
=\gamma_j(a_1,\ldots,a_j).
\end{equation}

To prepare for the Local Lemma, we will first construct a partial map from $V(F)$ to $V(H)$ such that the conditional failure probabilities of the edges incident to each unassigned vertex have small sum. We build the partial map one vertex at a time, requiring the chosen vertices of $H$ to be distinct and to span a complete $r$-graph in $H$. Call a partial map satisfying this condition \emph{legal}. Legality ensures that every completely assigned edge of $F$ is successful.

Consider a legal partial map $\varphi$ from $V(F)$ to $V(H)$, and let $U\subseteq V(F)$ be the set of unassigned vertices. For each $e\in F$, put $s_e\coloneqq|e\cap U|$, and call $e$ \emph{unfinished} if $s_e\ge1$. For each unfinished edge $e$, let $p_e$ be the probability that $e$ fails when its unassigned vertices are mapped independently and uniformly to $V(H)$. If $\varphi$ maps the assigned vertices of $e$ to $a_1,\ldots,a_{r-s_e}$, then $p_e=\gamma_{r-s_e}(a_1,\ldots,a_{r-s_e})$. We measure the total remaining failure probability, counting once for each unassigned vertex of an edge, by
\[
\Phi(\varphi)\coloneqq\sum_{\substack{e\in F\\s_e\ge1}}s_ep_e.
\]
We write $\Phi$ for $\Phi(\varphi)$ when the partial map is fixed. For $v\in U$, let $\ell_v\coloneqq\sum_{e\in F\colon v\in e}p_e$ be the sum of the conditional failure probabilities of the edges of $F$ containing $v$. Then $\Phi=\sum_{v\in U}\ell_v$, and the initial value is $\Phi_0=rmq_H$.

Let $\mathbf a=(a_1,\ldots,a_k)$ be the list of vertices of $H$ selected by $\varphi$. Since $\varphi$ is legal, these vertices are distinct and span a complete $r$-graph. A vertex of $H$ outside $\{a_1,\ldots,a_k\}$ is a legal choice for the next assignment precisely when it forms an edge of $H$ with every $(r-1)$-subset of this set. We use the following sums to bound the proportion of vertices that fail this edge condition. Write $[k]\coloneqq\{1,\ldots,k\}$, with $[0]=\emptyset$, and use $\binom{[k]}j$ for the family of its $j$-element subsets. Define
\[
\begin{aligned}
S_j(\mathbf a)\coloneqq
\sum_{I\in\binom{[k]}j}\gamma_j((a_i)_{i\in I})
\quad\text{for }0\le j\le r-1,\qquad\text{and}\qquad \Xi(\mathbf a)\coloneqq
\sum_{j=0}^{r-1}T^{r-1-j}S_j(\mathbf a).
\end{aligned}
\]
Since each $\gamma_j$ is symmetric, reordering its arguments does not change its value. We omit the argument $\mathbf a$ from $S_j(\mathbf a)$ and $\Xi(\mathbf a)$ when the partial map is fixed. For the empty partial map, $S_0=q_H$ and $S_j=0$ for $1\le j\le r-1$, so the initial value of $\Xi$ is $\Xi_0=q_HT^{r-1}$. By the union bound, $S_{r-1}$ bounds the proportion of vertices of $H$ that fail to form an edge with some $(r-1)$-subset of $\{a_1,\ldots,a_k\}$. For $1\le j\le r-1$, appending a uniform random vertex of $H$ to $\mathbf a$ gives an expected increase of $S_{j-1}$ in $S_j$. Summing these expected increases with weights $T^{r-1-j}$ gives an expected increase in $\Xi$ of at most $\Xi/T$, as shown in \Cref{clm:potential-updates}(\ref{item:potential-xi}).

We combine the two potentials by defining
\[
\Psi\coloneqq\Phi+r^2M\Xi
=
\sum_{\substack{e\in F\\s_e\ge1}}s_ep_e
+r^2M\sum_{j=0}^{r-1}T^{r-1-j}
\sum_{I\in\binom{[k]}j}\gamma_j((a_i)_{i\in I}).
\]
Write $\Psi_{v\mapsto x}$ for its value after a legal assignment of $v\in U$ to $x\in V(H)$, and $\Psi_0$ for its initial value.

We will maintain $\Psi\le M$. Since $\Phi,\Xi\ge0$, this condition implies $\Xi\le1/r^2$. By the union bound and \eqref{eq:n-choice}, the proportion of host vertices that are not legal choices is at most $k/n+S_{r-1}\le v(F)/n+\Xi\le\beta$, so at least a proportion $1-\beta$ of the host vertices are legal choices. To verify the initial condition, we use $q_H\le q$ and the upper bound on $T$ in \eqref{eq:T-window}, which give
\[
\Phi_0\le rmq=\frac{r-1}{r}M,
\quad\text{and}\quad
r^2M\Xi_0\le r^2MqT^{r-1}<\frac Mr.
\]
Adding these two inequalities yields $\Psi_0<M$.

For an assignment $v\mapsto x$ with $v\in U$ and $x\in V(H)$, put $\mu_v\coloneqq\sum_{e\in F\colon v\in e}(s_e-1)p_e$. Each edge $e\in F$ containing $v$ has $s_e-1$ unassigned vertices after this assignment. The newly completed edges contribute $0$ to $\Phi$, so the new contribution of the edges containing $v$ is
\[
\tau_v(x)\coloneqq
\sum_{\substack{e\in F\\v\in e,\ s_e\ge2}}
(s_e-1)\gamma_{r-s_e+1}\bigl((\varphi(u))_{u\in e\setminus U},x\bigr).
\]
Here the value of $\gamma_{r-s_e+1}$ is the conditional failure probability of $e$ after assigning $v\mapsto x$. The expression for $\tau_v(x)$ is defined for every $x\in V(H)$, including choices that are not legal. For a legal assignment $v\mapsto x$, the contributions of edges not containing $v$ are unchanged. We therefore write the resulting value of $\Phi$ as
\[
\Phi_{v\mapsto x}\coloneqq
\sum_{\substack{e\in F\\v\notin e,\ s_e\ge1}}s_ep_e+\tau_v(x)
= \sum_{\substack{e\in F\\v\notin e,\ s_e\ge1}}s_ep_e + \sum_{\substack{e\in F\\v\in e,\ s_e\ge2}}
(s_e-1)\gamma_{r-s_e+1}\bigl((\varphi(u))_{u\in e\setminus U},x\bigr).
\]
Write $\Delta_x\Xi\coloneqq\Xi(\mathbf a,x)-\Xi(\mathbf a)$ for the change in the clique potential. Let $A\subseteq V(H)$ be the set of legal choices for $x$. We first estimate the potential updates and the size of $A$.

\Needspace{8\baselineskip}
\begin{claim}\label{clm:potential-updates}
For every legal partial map and every $v\in U$, the following statements hold.
\begin{enumerate}
\renewcommand{\theenumi}{\roman{enumi}}
\renewcommand{\labelenumi}{\textup{(\theenumi)}}
\item\label{item:potential-phi}
We have $\E_x[\tau_v(x)]=\mu_v\le(r-1)\ell_v$.
\item\label{item:potential-xi}
We have $\Delta_x\Xi\ge0$ for every $x\in V(H)$ and $\E_x[\Delta_x\Xi]\le\Xi/T$.
\item\label{item:potential-phi-update}
Every $x\in A$ satisfies $\Phi_{v\mapsto x}=\Phi-\ell_v-\mu_v+\tau_v(x)$.
\item\label{item:legal-choices}
The set $A$ of legal choices satisfies $\frac{|A|}{n}\ge1-\frac{v(F)}n-\Xi$.
\end{enumerate}
\end{claim}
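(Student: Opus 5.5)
All four parts are direct consequences of the definitions together with the tower identity \eqref{eq:tower}. I would prove them in the order (i), (iii), (ii), (iv).

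For (i), I would use linearity of expectation. For each edge $e\ni v$ with $s_e\ge2$, the assigned vertices of $e$ map to $a_1,\ldots,a_{r-s_e}$. By \eqref{eq:tower} with $j=r-s_e<r$, averaging over $x$ gives
\[
\E_x\bigl[\gamma_{r-s_e+1}(a_1,\ldots,a_{r-s_e},x)\bigr]=\gamma_{r-s_e}(a_1,\ldots,a_{r-s_e})=p_e .
\]
Hence $\E_x[\tau_v(x)]=\sum_{e\ni v,\,s_e\ge2}(s_e-1)p_e$. The edges with $s_e=1$ contribute $(s_e-1)p_e=0$ to $\mu_v$, so this sum equals $\mu_v$. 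Since $s_e\le r$ and $p_e\ge0$, we get $\mu_v\le(r-1)\sum_{e\ni v}p_e=(r-1)\ell_v$.

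For (iii), I would split $\Phi=\sum_{e\not\ni v,\,s_e\ge1}s_ep_e+\sum_{e\ni v}s_ep_e$. Every edge containing $v$ is unfinished, and
\[
\sum_{e\ni v}s_ep_e=\sum_{e\ni v}p_e+\sum_{e\ni v}(s_e-1)p_e=\ell_v+\mu_v .
\]
Comparing with the displayed formula for $\Phi_{v\mapsto x}$ gives $\Phi_{v\mapsto x}=\Phi-\ell_v-\mu_v+\tau_v(x)$. Legality of $x$ is needed only to ensure that the newly completed edges are successful, so that they contribute $0$; the definition of $\Phi_{v\mapsto x}$ already encodes this.

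For (ii), appending $x$ as $a_{k+1}$ keeps every $j$-subset of $[k]$ and adds the subsets $I\cup\{k+1\}$ with $I\in\binom{[k]}{j-1}$. Thus
\[
\Delta_x S_j=\sum_{I\in\binom{[k]}{j-1}}\gamma_j\bigl((a_i)_{i\in I},x\bigr)\ge0 \quad (1\le j\le r-1),
\]
and $\Delta_xS_0=0$. Each $\gamma_j$ takes values in $[0,1]$, so every weighted sum of these increments is nonnegative, which gives $\Delta_x\Xi\ge0$. By \eqref{eq:tower} and symmetry, $\E_x[\Delta_xS_j]=S_{j-1}$. Therefore
\[
\E_x[\Delta_x\Xi]=\sum_{j=1}^{r-1}T^{r-1-j}S_{j-1}=\frac1T\sum_{i=0}^{r-2}T^{r-1-i}S_i\le\frac{\Xi}{T},
\]
where the last step drops the nonnegative term $S_{r-1}$ (using $T>0$).

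For (iv), a vertex $x$ is illegal only if $x\in\{a_1,\ldots,a_k\}$, which excludes at most $k\le v(F)$ vertices, or if some $(r-1)$-subset $I\subseteq[k]$ fails with $x$, meaning $\gamma_r((a_i)_{i\in I},x)=1$. For fixed $I$, the proportion of such $x$ is $\E_x[\gamma_r((a_i)_{i\in I},x)]=\gamma_{r-1}((a_i)_{i\in I})$, by the definition of $\gamma_{r-1}$ as an expectation. Summing over $I$ via the union bound, the illegal proportion is at most $v(F)/n+S_{r-1}$. Since $S_{r-1}\le\Xi$ (its weight is $T^0=1$ and all other terms are nonnegative), we get $|A|/n\ge1-v(F)/n-\Xi$.

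No step is a real obstacle; the only places needing care are the index bookkeeping in (ii) and the check that edges with $s_e=1$ contribute nothing to $\mu_v$.
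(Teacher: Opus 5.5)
Your proposal is correct and follows the paper's proof essentially line by line: the tower identity \eqref{eq:tower} for (i) and (ii), including $\E_x[\Delta_x\Xi]=(\Xi-S_{r-1})/T$; the split $\sum_{e\ni v}s_ep_e=\ell_v+\mu_v$ for (iii); and the union bound over $\{a_1,\ldots,a_k\}$ and the $(r-1)$-subsets for (iv), with only the order of presentation changed. One small correction to your side remark in (iii): newly completed edges drop out of $\Phi$ because $\Phi$ sums only over edges with $s_e\ge1$, not because $x$ is legal. Legality is instead what makes the new map legal (so completed edges are successful and the iteration can continue).
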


\begin{proof}
For~(\ref{item:potential-phi}), since $1\le s_e\le r$ for every $e\in F$ containing $v$, we have $\mu_v\le(r-1)\ell_v$. Using \eqref{eq:tower} in the definition of $\tau_v(x)$ and noting that edges with $s_e=1$ have weight $0$, we obtain
\[
\begin{aligned}
\E_x[\tau_v(x)]
=\sum_{\substack{e\in F\\v\in e,\ s_e\ge2}}
(s_e-1)\E_x\!\left[
\gamma_{r-s_e+1}\bigl((\varphi(u))_{u\in e\setminus U},x\bigr)
\right]
=\sum_{\substack{e\in F\\v\in e,\ s_e\ge2}}(s_e-1)p_e
=\mu_v.
\end{aligned}
\]

For~(\ref{item:potential-xi}), appending $x$ to $\mathbf a$ leaves the terms indexed by subsets of $[k]$ unchanged. The additional terms in $S_j(\mathbf a,x)$ are indexed by the $j$-subsets of $[k+1]$ containing $k+1$. Thus, for $1\le j\le r-1$ and every $x\in V(H)$,
\[
S_j(\mathbf a,x)-S_j(\mathbf a)
=\sum_{I\in\binom{[k]}{j-1}}\gamma_j((a_i)_{i\in I},x).
\]
Since $S_0(\mathbf a,x)=S_0(\mathbf a)$, $T>0$, and all conditional failure probabilities are nonnegative, the definition of $\Xi$ gives
\[
\Delta_x\Xi
=\sum_{j=1}^{r-1}T^{r-1-j}\bigl(S_j(\mathbf a,x)-S_j(\mathbf a)\bigr)
=\sum_{j=1}^{r-1}T^{r-1-j}
\sum_{I\in\binom{[k]}{j-1}}\gamma_j((a_i)_{i\in I},x)
\ge0.
\]
By \eqref{eq:tower}, averaging over $x$ gives
\[
\E_x[S_j(\mathbf a,x)-S_j(\mathbf a)]
=\sum_{I\in\binom{[k]}{j-1}}\gamma_{j-1}((a_i)_{i\in I})
=S_{j-1}(\mathbf a).
\]
By linearity of expectation and these identities,
\[
\begin{aligned}
\E_x[\Delta_x\Xi]
=\sum_{j=1}^{r-1}T^{r-1-j}S_{j-1}
=\frac{\Xi-S_{r-1}}{T}
\le\frac{\Xi}{T}.
\end{aligned}
\]

For~(\ref{item:potential-phi-update}), the contribution of the edges containing $v$ before the assignment is $\sum_{e\in F\colon v\in e}s_ep_e=\ell_v+\mu_v$. After the assignment it is $\tau_v(x)$. All other contributions are unchanged, giving the stated formula for $\Phi_{v\mapsto x}$.

For~(\ref{item:legal-choices}), let $x$ be a uniform random vertex of $H$. If $x\notin A$, then either $x\in\{a_1,\ldots,a_k\}$ or $\gamma_r((a_i)_{i\in I},x)=1$ for some $I\in\binom{[k]}{r-1}$. The first event has probability $k/n$. For each fixed $I\in\binom{[k]}{r-1}$, the event $\gamma_r((a_i)_{i\in I},x)=1$ has probability $\gamma_{r-1}((a_i)_{i\in I})$ by the definition of $\gamma_{r-1}$. The union bound and the definition of $S_{r-1}$ therefore give
\[
\begin{aligned}
1-\frac{|A|}{n}
=\Prb[x\notin A]
&\le\Prb[x\in\{a_1,\ldots,a_k\}]
+\sum_{I\in\binom{[k]}{r-1}}
\Prb[\gamma_r((a_i)_{i\in I},x)=1]\\
&=\frac{k}{n}
+\sum_{I\in\binom{[k]}{r-1}}\gamma_{r-1}((a_i)_{i\in I})
=\frac{k}{n}+S_{r-1}.
\end{aligned}
\]
Since $k\le v(F)$ and $\Xi$ is the sum of $S_{r-1}$ and nonnegative terms, we obtain
\[
\frac{|A|}{n}\ge1-\frac{k}{n}-S_{r-1}
\ge1-\frac{v(F)}n-\Xi.\qedhere
\]
\end{proof}

We now use these estimates to choose a legal assignment that decreases $\Psi$.

\begin{claim}\label{lem:one-step}
Suppose that the current partial map is legal and $\Psi\le M$. If $v\in U$ satisfies $\ell_v\ge1/(4r)$, then some legal choice $x$ satisfies $\Psi_{v\mapsto x}<\Psi$.
\end{claim}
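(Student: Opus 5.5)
The plan is an averaging argument: I will show that the average of $\Psi_{v\mapsto x}-\Psi$ over the legal choices $x\in A$ is negative. Some legal $x$ then satisfies $\Psi_{v\mapsto x}<\Psi$.

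\textbf{Step 1: $A$ is large.} From $\Psi\le M$ and $\Phi\ge0$ we get $r^2M\Xi\le M$, hence $\Xi\le 1/r^2$. Combining \Cref{clm:potential-updates}(\ref{item:legal-choices}) with \eqref{eq:n-choice} gives
\[
\frac{|A|}{n}\ge 1-\Bigl(\beta-\frac1{r^2}\Bigr)-\frac1{r^2}=1-\beta>0.
\]
So $A$ is nonempty and has density at least $1-\beta$.

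\textbf{Step 2: the change in $\Psi$ at a legal $x$.} For $x\in A$, \Cref{clm:potential-updates}(\ref{item:potential-phi-update}) gives
\[
\Psi_{v\mapsto x}-\Psi=-\ell_v-\mu_v+\tau_v(x)+r^2M\,\Delta_x\Xi .
\]
The terms $\tau_v(x)$ and $\Delta_x\Xi$ are nonnegative for every $x\in V(H)$, by \Cref{clm:potential-updates}(\ref{item:potential-xi}) and the nonnegativity of each $\gamma_j$. Hence their sum over $A$ is at most their sum over all of $V(H)$. Using \Cref{clm:potential-updates}(\ref{item:potential-phi}) and (\ref{item:potential-xi}), together with $r^2M\Xi\le\Psi\le M$, this yields
\[
\frac1{|A|}\sum_{x\in A}\bigl(\tau_v(x)+r^2M\Delta_x\Xi\bigr)\le\frac{n}{|A|}\Bigl(\mu_v+\frac{r^2M\Xi}{T}\Bigr)\le\frac{\mu_v+M/T}{1-\beta}.
\]

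\textbf{Step 3: comparison.} It suffices to show
\[
\mu_v+\frac MT<(1-\beta)(\ell_v+\mu_v),
\qquad\text{equivalently}\qquad
\beta(\ell_v+\mu_v)+\frac MT<\ell_v .
\]
Since $\mu_v\le(r-1)\ell_v$, we have $\ell_v+\mu_v\le r\ell_v$. It is therefore enough that $(1-r\beta)\ell_v>M/T$. By \eqref{eq:beta-choice}, $1-r\beta>4rM/T$. Combined with $\ell_v\ge1/(4r)$, this gives $(1-r\beta)\ell_v>M/T$. This holds strictly provided $M>0$, which is true since $M$ is positive by its definition. Hence the average over $A$ of $\Psi_{v\mapsto x}-\Psi$ is negative, and some $x\in A$ decreases $\Psi$.

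\textbf{Main obstacle.} The delicate point is the passage in Step 2 from the uniform expectation to the average over $A$. It relies on the increment terms being nonnegative on all of $V(H)$. The negative terms $-\ell_v-\mu_v$ are constant in $x$, so they are unaffected by restricting to $A$. The factor $1/(1-\beta)$ is then absorbed exactly by the window chosen in \eqref{eq:beta-choice}.
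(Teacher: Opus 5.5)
Your proof is correct and follows the paper's argument essentially step for step. It uses the same lower bound $|A|/n\ge1-\beta$, the same passage from the uniform expectation to the average over $A$ via the nonnegativity of $\tau_v$ and $\Delta_x\Xi$, and the same final comparison using $\mu_v\le(r-1)\ell_v$, $\ell_v\ge1/(4r)$ and \eqref{eq:beta-choice}. The only cosmetic difference is that you bound $r^2M\Xi\le M$ first and invoke $\ell_v\ge1/(4r)$ at the end, whereas the paper writes $r^2M\Xi\le4rM\ell_v$ and collects the coefficient of $\ell_v$.
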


\begin{proof}
Since $\Phi,\Xi\ge0$ and $\Psi\le M$, we have $\Xi\le1/r^2$. By \Cref{clm:potential-updates}(\ref{item:legal-choices}) and \eqref{eq:n-choice}, using $\Xi\le1/r^2$ and $\beta<1/r<1$, we obtain
\[
\frac{|A|}{n}
\ge1-\frac{v(F)}n-\Xi
\ge1-\left(\beta-\frac1{r^2}\right)-\frac1{r^2}
=1-\beta>0.
\]
By \Cref{clm:potential-updates}(\ref{item:potential-phi-update}) and the definition of $\Delta_x\Xi$, we have, for every $x\in A$,
\[
\Psi_{v\mapsto x}=\Psi-\ell_v-\mu_v+\tau_v(x)+r^2M\Delta_x\Xi.
\]
By definition, $\tau_v(x)$ is a sum of nonnegative terms for every $x\in V(H)$. Since $x$ is uniform in $V(H)$ and $|A|/n\ge1-\beta>0$, \Cref{clm:potential-updates}(\ref{item:potential-phi}) and~(\ref{item:potential-xi}) give
\[
\begin{aligned}
\E_x[\tau_v(x)\mid x\in A]
&=\frac1{|A|}\sum_{x\in A}\tau_v(x)
\le\frac{n}{|A|}\E_x[\tau_v(x)]
\le\frac{\E_x[\tau_v(x)]}{1-\beta}
=\frac{\mu_v}{1-\beta},\\
\E_x[\Delta_x\Xi\mid x\in A]
&=\frac1{|A|}\sum_{x\in A}\Delta_x\Xi
\le\frac{n}{|A|}\E_x[\Delta_x\Xi]
\le\frac{\E_x[\Delta_x\Xi]}{1-\beta}
\le\frac{\Xi}{T(1-\beta)}.
\end{aligned}
\]
Substituting these bounds into the preceding formula for $\Psi_{v\mapsto x}$ and writing the fractions with denominator $T(1-\beta)$ gives
\begin{align*}
\E_x[\Psi_{v\mapsto x}\mid x\in A]
\le\Psi-\ell_v-\mu_v
+\frac{T\mu_v+r^2M\Xi}{T(1-\beta)}
=\Psi-\ell_v+\frac{\beta T\mu_v+r^2M\Xi}{T(1-\beta)}.
\end{align*}
We now use $\mu_v\le(r-1)\ell_v$. The bounds $\Xi\le1/r^2$ and $\ell_v\ge1/(4r)$ also give $r^2M\Xi\le M\le4rM\ell_v$. Substituting these two estimates into the preceding bound and collecting the coefficient of $\ell_v$, we obtain
\begin{align*}
\E_x[\Psi_{v\mapsto x}\mid x\in A]
\le\Psi-\ell_v
+\frac{\beta(r-1)T+4rM}{T(1-\beta)}\ell_v
=\Psi-\frac{(1-r\beta)T-4rM}{T(1-\beta)}\ell_v.
\end{align*}
By \eqref{eq:beta-choice}, we have $(1-r\beta)T-4rM>0$, so the coefficient of $\ell_v$ subtracted from $\Psi$ is positive. Hence the conditional expectation is less than $\Psi$, and some $x\in A$ satisfies the claim.
\end{proof}

We now iterate these assignments to obtain a partial map to which the Local Lemma can be applied.

\begin{claim}\label{clm:small-load-map}
There is a legal partial map with $\Psi<M$ such that $\ell_v<1/(4r)$ for every unassigned vertex $v$.
\end{claim}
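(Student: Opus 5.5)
Start from the empty partial map, which is legal (no vertices selected, so distinctness and the clique condition hold vacuously) and satisfies $\Psi_0<M$, as verified above using $q_H\le q$ and the upper bound on $T$ in \eqref{eq:T-window}. Then run a greedy procedure. While the current legal partial map has some unassigned vertex $v$ with $\ell_v\ge1/(4r)$, apply \Cref{lem:one-step} to it. The hypothesis $\Psi\le M$ holds because we will maintain the strict inequality $\Psi<M$ throughout. The claim yields a legal choice $x\in A$ with $\Psi_{v\mapsto x}<\Psi$. Extend the map by $v\mapsto x$.

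The invariants are preserved as follows. Legality holds because $x\in A$ is by definition a vertex outside $\{a_1,\ldots,a_k\}$ that forms an edge of $H$ with every $(r-1)$-subset of the selected vertices. Hence the enlarged list is still distinct and spans a complete $r$-graph. Also $\Psi$ strictly decreases, so $\Psi<M$ persists. Here one should check that the formula $\Psi_{v\mapsto x}=\Psi-\ell_v-\mu_v+\tau_v(x)+r^2M\Delta_x\Xi$ from \Cref{clm:potential-updates}(\ref{item:potential-phi-update}) really is the value of $\Psi$ for the new partial map. That requires the new $p_e$ to be the conditional failure probabilities $\gamma_{r-s_e+1}$, which holds by construction, and newly completed edges to contribute $0$, which holds since they have $s_e=0$ and are excluded from the sum.

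Termination is immediate. Each step assigns one new vertex of $F$, so there are at most $v(F)$ steps. The process stops exactly when either $U=\emptyset$ or every $v\in U$ has $\ell_v<1/(4r)$. In both cases the resulting map is legal, has $\Psi<M$, and satisfies the load condition (vacuously if $U=\emptyset$). This is the desired partial map.

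There is no real obstacle, since \Cref{lem:one-step} does all the work. The only care needed is bookkeeping: the lemma needs $\Psi\le M$ at each stage, which follows from the strict decrease starting at $\Psi_0<M$, and the updated quantities $\ell_v$, $\Phi$, $\Xi$ must be recomputed for the new map before the next iteration, which is legitimate since all definitions depend only on the current legal partial map.
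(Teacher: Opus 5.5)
Your proposal is correct and is essentially the paper's proof. You start from the empty map with $\Psi_0<M$ and repeatedly apply \Cref{lem:one-step} to an unassigned vertex with $\ell_v\ge1/(4r)$. The strict decrease of $\Psi$ keeps $\Psi<M$ and legality, and the process stops after at most $v(F)$ steps. One minor point: the formula for $\Psi_{v\mapsto x}$ that you check is derived inside the proof of \Cref{lem:one-step}, whereas \Cref{clm:potential-updates}(\ref{item:potential-phi-update}) itself only gives $\Phi_{v\mapsto x}$.
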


\begin{proof}
Starting with the empty partial map, for which $\Psi_0<M$, apply \Cref{lem:one-step} whenever some unassigned vertex $v$ satisfies $\ell_v\ge1/(4r)$. Each step assigns one vertex, preserves legality, and decreases $\Psi$, so $\Psi<M$ throughout. The process stops after at most $v(F)$ steps. At termination, every unassigned vertex $v$ satisfies $\ell_v<1/(4r)$.
\end{proof}

Fix a partial map $\varphi$ as in \Cref{clm:small-load-map}, and let $U$ be its set of unassigned vertices. If $U=\emptyset$, legality gives a copy of $F$. Otherwise, in the second stage we map the remaining vertices independently and uniformly to $V(H)$. We require only the edges of $F$ to be preserved; the final images need not span a clique.

\begin{claim}\label{lem:local-completion}
Suppose that the current partial map is legal and $\ell_v<1/(4r)$ for every $v\in U$. If the remaining vertices are mapped independently and uniformly to $V(H)$, then the probability that every edge of $F$ is successful is at least $(1-1/(2r))^m$.
\end{claim}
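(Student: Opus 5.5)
We will apply the lopsided (or standard asymmetric) Lov\'asz Local Lemma to the bad events $B_e$, one for each unfinished edge $e\in F$, where $B_e$ is the event that $e$ fails once its unassigned vertices are mapped independently and uniformly. Edges with $s_e=0$ are already successful by legality, so only unfinished edges matter. By definition $\Prb[B_e]=p_e$. The event $B_e$ is determined by the images of the vertices in $e\cap U$, so $B_e$ is mutually independent of the family $\{B_f\colon f\cap e\cap U=\emptyset\}$. Hence we take as dependency graph the graph on unfinished edges in which $e\sim f$ whenever $e\cap f\cap U\ne\emptyset$.

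We use the asymmetric Local Lemma with weights $x_e\coloneqq 2p_e$ if this is small, or more cleanly in the form: if there are $x_e\in[0,1)$ with $\Prb[B_e]\le x_e\prod_{f\sim e}(1-x_f)$, then $\Prb[\bigcap_e\overline{B_e}]\ge\prod_e(1-x_e)$. The key input is that for each $v\in U$ we have $\sum_{f\ni v}p_f=\ell_v<1/(4r)$. In particular each $p_e<1/(4r)$. We choose $x_e\coloneqq 2p_e<1/(2r)$, which immediately yields the final bound $\prod_e(1-x_e)\ge(1-1/(2r))^m$, since each factor is at least $1-1/(2r)$. The count includes only unfinished edges, and the remaining factors are trivially $1$.

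It remains to verify the LLL condition $p_e\le 2p_e\prod_{f\sim e}(1-2p_f)$, that is, $\prod_{f\sim e}(1-2p_f)\ge1/2$. If $p_e=0$ this is trivial. Otherwise we use $\prod(1-y_f)\ge1-\sum y_f$ for $y_f\in[0,1]$. Every neighbour $f$ of $e$ contains some $v\in e\cap U$, so
\[
\sum_{f\sim e}2p_f\le 2\sum_{v\in e\cap U}\ell_v< 2\cdot r\cdot\frac1{4r}=\frac12,
\]
giving $\prod_{f\sim e}(1-2p_f)>1/2$, as required.

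The main point of care, rather than a real obstacle, is the dependency structure. We must check that $B_e$ is independent of all events not sharing an unassigned vertex with $e$; this holds because the assigned images are fixed and the unassigned images are independent. We must also handle edges with $p_e=0$ and the case where $e$ itself appears in the neighbour sum, which only helps since we overcount. The factor $2$ in $x_e$ matches exactly the threshold $1/(4r)$ from \Cref{clm:small-load-map}, which is why that threshold was chosen.
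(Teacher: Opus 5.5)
Your proposal is correct and follows essentially the same route as the paper. Both use the dependency graph on unfinished edges defined by a shared unassigned vertex, the asymmetric Local Lemma with $x_e=2p_e<1/(2r)$, and the bound $\sum_{f\sim e}p_f\le\sum_{v\in e\cap U}\ell_v<1/4$ to get $\prod_{f\sim e}(1-x_f)\ge 1/2$, which gives the lower bound $(1-1/(2r))^m$.
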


\begin{proof}
For every unfinished edge $e$, let $B_e$ be the event that $e$ fails. It is determined by the nonempty set $e\cap U$ of at most $r$ independent random variables, and $\Prb[B_e]=p_e$. We construct a dependency graph whose vertices are the events $B_e$. For distinct unfinished edges $e,f$, write $f\sim e$ if $e\cap f\cap U\ne\emptyset$, and join the corresponding events $B_e$ and $B_f$. For each $e$, the events not joined to $B_e$ depend only on variables outside $e\cap U$. Thus $B_e$ is independent of the collection of its nonneighbors, as required for a dependency graph.

By the asymmetric Lov\'asz Local Lemma of Erd\H{o}s and Lov\'asz~\cite{ErdosLovasz75}, in the form stated by Lu and Sz\'ekely~\cite[Lemma~2]{LuSzekely07}, the probability of avoiding all bad events is at least $\prod_e(1-x_e)$ if numbers $x_e\in[0,1)$ satisfy
\[
p_e\le x_e\prod_{f\sim e}(1-x_f)
\quad\text{for every unfinished edge }e.
\]

Take $x_e\coloneqq2p_e$. Since $e\cap U$ is nonempty, the bound $\ell_v<1/(4r)$ gives $0\le x_e=2p_e\le2\ell_v<1/(2r)<1$ for every $v\in e\cap U$. Every neighbor of $B_e$ shares an unassigned vertex with $e$. Summing the bounds on $\ell_v$ over the at most $r$ vertices of $e\cap U$ gives
\[
\sum_{f\sim e}p_f
\le\sum_{v\in e\cap U}\ell_v
<\frac{|e\cap U|}{4r}\le\frac14.
\]
The inequality $\prod_i(1-y_i)\ge1-\sum_i y_i$ for $y_i\in[0,1]$ therefore implies
\[
\prod_{f\sim e}(1-x_f)
\ge1-\sum_{f\sim e}x_f
\ge\frac12.
\]
Since $x_e=2p_e$, we have $x_e\prod_{f\sim e}(1-x_f)\ge2p_e\cdot\frac12=p_e$, which verifies the Local Lemma hypothesis. There are at most $m$ unfinished edges, so the Local Lemma yields
\[
\Prb\left[\bigcap_e\overline{B_e}\right]
\ge\prod_e(1-x_e)
\ge\left(1-\frac1{2r}\right)^m.
\]
Every completely assigned edge was already successful by legality, which proves the claim.
\end{proof}

To obtain a copy of $F$ from \Cref{lem:local-completion}, we must also exclude repeated images at vertices that do not lie in a common edge. Since $\varphi$ is legal, it is injective. For any pair of distinct vertices of $F$ with at least one vertex in $U$, their images agree with probability $1/n$. By the union bound, the probability of any repeated image is at most $\binom{v(F)}2/n$. An assignment embeds $F$ precisely when it is injective and every edge is successful. By \Cref{lem:local-completion} and the second inequality in \eqref{eq:n-choice}, we have
\begin{multline*}
\Prb[\text{the final map embeds }F]\\
\ge\Prb[\text{every edge is successful}]
-\Prb[\text{the final map is not injective}]
\ge\left(1-\frac1{2r}\right)^m
-\frac{\binom{v(F)}2}{n}>0.
\end{multline*}
Thus $H$ contains $F$, proving the embedding assertion.

For every fixed $0<q<c_rm^{-(r-1)/r}$, the embedding assertion gives $1-r!\ex(n,F)/n^r>q$ for all sufficiently large $n$. Since $r!\ex(n,F)/n^r\to\pi(F)$, letting $n\to\infty$ and then taking the supremum over $0<q<c_rm^{-(r-1)/r}$ gives $\pi(F)\le1-c_rm^{-(r-1)/r}$.
\end{proof}

\section*{Acknowledgements}
X.H. was supported by the National Key R\&D Program of China (2023YFA1010203), the
National Natural Science Foundation of China (12471336).
the Quantum Science and Technology-National Science and Technology Major Project (2021ZD0302902).
X.L. was supported by the Excellent Young Talents Program (Overseas) of the National Natural Science Foundation of China.
This research was carried out while the first author was visiting the School of Mathematical Sciences at the University of Science and Technology of China.

\section*{Declaration on the use of AI}
The authors used generative AI tools to assist in discussing proof strategies, checking proofs, and improving exposition.

\bibliographystyle{abbrv}
\bibliography{improved-sidorenko}

\end{document}